\newcounter{spec}
{\end{list}}
\renewcommand{\P}{{\mathbf P}}
\newcommand{\Betti}{{\rm Betti}}
\newcommand{\A}{{\mathbb A}}
\newcommand{\Z}{{\mathbb Z}}
\newcommand{\Q}{{\mathbb Q}}
\newcommand{\C}{{\mathbb C}}
\newcommand{\Br}{{\operatorname{Br   }}}
\newcommand{\Hom}{{\operatorname{Hom}}}
\newcommand{\Spec}{{\operatorname{Spec \ }}}
\renewcommand{\lim}{\varprojlim}
\numberwithin{equation}{section}
\newfont{\gothic}{eufb10}
\newtheorem{theo}{Th\'{e}or\`{e}me}[section]
\newtheorem{prop}[theo]{Proposition}
\newtheorem{lem}[theo]{Lemme}
\newtheorem{cor}[theo]{Corollaire}
\theoremstyle{definition}
\newtheorem{defi}[theo]{D\'efinition}
\theoremstyle{remark}
\newtheorem{rema}[theo]{Remarque}
\newcommand{\bthe}{\begin{theo}}
\newcommand{\ble}{\begin{lem}}
\newcommand{\bpr}{\begin{prop}}
\newcommand{\bco}{\begin{cor}}
\newcommand{\bde}{\begin{defi}}
\newcommand{\ethe}{\end{theo}}
\newcommand{\ele}{\end{lem}}
\newcommand{\epr}{\end{prop}}
\newcommand{\eco}{\end{cor}}
\newcommand{\ede}{\end{defi}}
\newcommand{\G}{{\mathbb G}}
\newcommand\NS{{\operatorname{NS}}}
\DeclareFontFamily{U}{wncy}{}
\DeclareFontShape{U}{wncy}{m}{n}{%
<5>wncyr5%
<6>wncyr6%
<7>wncyr7%
<8>wncyr8%
<9>wncyr9%
<10>wncyr10%
<11>wncyr10%
<12>wncyr6%
<14>wncyr7%
<17>wncyr8%
<20>wncyr10%
<25>wncyr10}{}
\DeclareMathAlphabet{\cyr}{U}{wncy}{m}{n}
\begin{document}

  \title[Produit  avec une courbe elliptique]{Cohomologie non ramifi\'ee  dans le produit  avec une courbe elliptique}

\author{J.-L. Colliot-Th\'el\`ene}
\address{Laboratoire de Math\'ematiques d'Orsay, B\^{a}timent 309,  Univ. Paris-Sud, CNRS, Universit\'e Paris-Saclay, 91405 Orsay, France }
\email{jlct@math.u-psud.fr}

\date{12 f\'evrier 2018; version r\'evis\'ee, 19 septembre 2018}
\maketitle

 \begin{abstract}
 Un th\'eor\`eme de Gabber (2002) permet de construire des classes de
cohomologie non ramifi\'ee dans le produit de certaines vari\'et\'es
et d'une courbe elliptique.
Le lien  entre la cohomologie non ramifi\'ee en degr\'e 3 et la conjecture de
Hodge enti\`ere pour les cycles de codimension deux  (2012) 
permet alors de donner de nombreuses classes de vari\'et\'es
pour lesquelles la conjecture de Hodge enti\`ere  pour les cycles de
codimension deux est en d\'efaut. Le cas particulier du produit avec
une surface d'Enriques a \'et\'e
\'etabli par Benoist et Ottem (2018). 
  \end{abstract}
  
  \begin{altabstract}
  A method of Gabber (2002) produces unramified cohomology classes in
the products of certain varieties with  an elliptic curve. The connection between third
  unramified cohomology and integral  Hodge conjecture for codimension 2 cycles (2012)
  then gives many examples of such a product for which this conjecture fails.
  The special case of the product with an Enriques surface was established by Benoist and Ottem (2018).

   \end{altabstract}

\section*{}
Sauf mention expresse du contraire, la cohomologie employ\'ee ici est la cohomologie \'etale de SGA4.
On utilise librement les propri\'et\'es de cette derni\`ere, comme on peut les trouver  
 dans le livre \cite{M}. Pour la cohomologie non ramifi\'ee, on
  renvoie le lecteur \`a  \cite{CTV} et aux r\'ef\'erences de cet article.  
  
  \section{Cohomologie non ramifi\'ee en tout degr\'e}

\begin{theo}\label{general}
Soit $X/\C$ une vari\'et\'e connexe,  projective et lisse et $\C(X)$ son corps des fonctions.
Soit $\ell$ un nombre premier.
 Soit $\alpha \in H^{i}(X,\Z/\ell) $ une classe de cohomologie dont
l'image dans $H^{i}(\C(X), \Z/{\ell})$ est non nulle.

(a)  Il existe une courbe elliptique  $E/\C$ et    $\beta \in H^1(E,\mu_{\ell})$
tels que l'image de $\alpha \cup \beta \in H^{i+1}(X \times E,\mu_{\ell} )$
dans $H^{i+1}(\C(X\times E),\mu_{\ell})$ soit non nulle. En particulier,
le groupe de cohomologie non ramifi\'ee $H^{i+1}_{nr}(\C(X\times E),\mu_{\ell})$ est non nul.

(b) Si $X$ peut \^etre d\'efinie sur un corps de nombres, pour toute courbe elliptique $E/\C$
d'invariant $j$ transcendant, le groupe de cohomologie non ramifi\'ee $H^{i+1}_{nr}(\C(X\times E),\mu_{\ell})$ est non nul.
\end{theo}
\begin{proof}
D'apr\`es  Gabber \cite[Prop. A.4]{G}, dont on garde les notations,
il existe une courbe lisse g\'eom\'etriquement connexe $U/\Q$, un point $P\in U(\Q)$, et une 
suite exacte de $U$-sch\'emas en groupes commutatifs lisses connexes
$$ 1 \to  \mu_{\ell,U} \to  {\mathcal E} \to {\mathcal E}' \to 1$$
dont la fibre au-dessus de $P$ s'identifie \`a la suite exacte de Kummer
$$1 \to \mu_{\ell,\Q} \to \G_{m,\Q}  \to \G_{m,\Q} \to 1$$
associ\'ee \`a $x \mapsto x^{\ell}$,
et dont la restriction au-dessus de $V=U\setminus P$ est une isog\'enie de
$V$-sch\'emas ab\'eliens de dimension relative 1. En outre toute $\ell$-isog\'enie
de courbes elliptiques sur $\C$  munie d'un isomorphisme de son noyau
avec $\mu_{\ell}$ est donn\'ee par l'\'evaluation de la suite exacte ci-dessus
en un point de $U(\C)$. L'invariant $j$  des fibres de ${\mathcal E}' \to U$
hors du point $P$
n'est en particulier pas constant, et prend une valeur transcendante sur $\Q$
en $M \in U(\C)$ si et seulement si $M$ n'est pas alg\'ebrique sur $\Q$.
Notons pour simplifier $Y= {\mathcal E}' $.
La suite exacte ci-dessus d\'efinit un torseur sur $Y$ sous $\mu_{\ell}$, donc
une classe $\beta \in H^1(Y,\mu_{\ell})$. La restriction de cette classe
au-dessus du point g\'en\'erique de $Y$ est la classe d'une fonction
rationnelle $g \in \Q(Y)^*/\Q(Y)^{*\ell}= H^1(\Q(Y),\mu_{\ell} )$.
L'extension $\Q(Y)(g^{1/\ell})/\Q(Y)$ se sp\'ecialise au-dessus du
point $P$ en l'extension $\Q(t^{1/\ell})/\Q(t)$, o\`u  
$\G_{m,\Q} = \Spec \Q[t,1/t]$.

Commen\c cons par \'etendre la situation ci-dessus de $\Q$ \`a $\C$.
On note $Y_{\C}=Y\times_{\Q}\C$ et $U_{\C}=U\times_{\Q}\C$. On a la projection $X \times_{\C}Y_{\C} \to U_{\C}$.
On consid\`ere le produit externe  $\alpha \cup \beta \in H^{i+1}(X \times_{\C}Y_{\C}, \mu_{\ell})$.
Il se sp\'ecialise au-dessus du point $P \in U(\C)$ en une classe
dans $H^{i+1}(X \times_{\C} \G_{m,\C}, \mu_{\ell})$.
La restriction de cette classe au point g\'en\'erique de $X \times_{\C}\G_{m,\C}$
est non nulle, car son r\'esidu le long du diviseur d\'efini par $t=0$ dans $X \times \A^1$ est la classe de $\alpha$ dans $H^{i}(\C(X),\Z/{\ell})$.
D'apr\`es  Gabber \cite[Prop.  A7]{G},   l'ensemble des points  $s$ de $U(\C)$ tels que la restriction
de  $\alpha \cup \beta$  au point g\'en\'erique de la fibre g\'eom\'etrique de $X \times_{\C} Y_{\C} \to U_{\C}$ en $s$
est  nulle est une union d\'enombrable de ferm\'es   de $U_{\C}$.
On a vu ci-dessus que $P$ n'est pas dans cet ensemble.
Cet ensemble est donc une union d\'enombrable de points de $U(\C)$. 
Ceci \'etablit le point (a).

 Supposons maintenant $X=X_{0}\times_{k} \C$, o\`u $k\subset \C$ d\'esigne la cl\^oture alg\'ebrique de $\Q$
dans $\C$. La cohomologie \'etale d'une vari\'et\'e   sur un corps alg\'ebriquement clos, 
 \`a coefficients de torsion premiers \`a la caract\'eristique,
ne change pas par extension de corps de base alg\'ebriquement clos \cite[VI.4.3]{M}.
 On dispose donc d'une classe 
 $\alpha_{0}   \in H^{i}(X_{0},\Z/\ell)$ d'image non nulle dans le groupe $H^{i}(k(X_{0}),\Z/\ell)$.
D'apr\`es  Gabber \cite[Prop. A7]{G},   l'ensemble des points  $s \in U_{k} $ tels que la restriction
de  $\alpha_{0} \cup \beta_{k}$  au point g\'en\'erique de la fibre g\'eom\'etrique de $X_{0} \times_{k} Y_{k} \to U_{k}$ en $s$
est  nulle est une union d\'enombrable de ferm\'es   de $U_{k}$, et donc, puisque
$P$ n'est pas dans cet ensemble, une union d\'enombrable de points  de $U(k)$. Pour tout point $M$
de $U(\C) \setminus U(k)$, la restriction de   $\alpha_{0} \cup \beta_{k}$  dans $H^{i}(\C(X\times_{\C}Y_{M}), \Z/\ell)$
est donc non nulle, ce qui \'etablit (b).
\end{proof}

\section{Cohomologie non ramifi\'ee en degr\'e 3 et conjecture de Hodge enti\`ere}

\begin{prop}\label{brauer}
Soit $X/\C$ une vari\'et\'e connexe,  projective et lisse, telle que $\Br(X)\neq 0$.

(a) Il existe une courbe elliptique $E/\C$
telle que $H^{3}_{nr}(\C(X\times E),\Q/\Z) \neq 0$.

(b) Si $X$ peut \^etre d\'efinie sur un corps de nombres, pour toute courbe elliptique $E/\C$
d'invariant $j$ transcendant, $H^{3}_{nr}(\C(X\times E),\Q/\Z) \neq 0$.

(c)  Si   le groupe de Chow des z\'ero-cycles de la vari\'et\'e complexe   $X$  est support\'e sur une courbe, alors
il existe une courbe elliptique $E/\C$
telle que  la conjecture de Hodge enti\`ere pour les cycles de codimension 2 sur $X \times E$ soit en d\'efaut.

(d) Si   le groupe de Chow des z\'ero-cycles de la vari\'et\'e complexe  $X$ est support\'e sur une courbe,
et  $X$ peut \^etre d\'efinie sur un corps de nombres, alors pour toute courbe elliptique $E/\C$
d'invariant $j$ transcendant, la conjecture de Hodge enti\`ere pour les cycles de codimension 2 sur $X \times E$ est en d\'efaut.
\end{prop}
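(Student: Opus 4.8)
Le plan est de d\'eduire (a) et (b) du Th\'eor\`eme~\ref{general} appliqu\'e en degr\'e $i=2$, puis d'en tirer (c) et (d) gr\^ace au th\'eor\`eme de Colliot-Th\'el\`ene et Voisin reliant la cohomologie non ramifi\'ee en degr\'e $3$ au d\'efaut de la conjecture de Hodge enti\`ere en codimension $2$. Comme $\Br(X)\neq 0$ et que le groupe de Brauer d'une vari\'et\'e lisse est de torsion, on choisit un nombre premier $\ell$ avec $\Br(X)[\ell]\neq 0$. La puret\'e du groupe de Brauer (Grothendieck) fournit une injection $\Br(X)\hookrightarrow \Br(\C(X))$, et la suite de Kummer sur le corps $\C(X)$ identifie $H^2(\C(X),\mu_\ell)$ au sous-groupe $\Br(\C(X))[\ell]$. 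Partant d'une classe non nulle $\alpha_0\in \Br(X)[\ell]$, la suite de Kummer sur $X$ en fournit un rel\`evement $\alpha\in H^2(X,\mu_\ell)$; son image dans $H^2(\C(X),\mu_\ell)$ s'envoie sur l'image non nulle de $\alpha_0$ dans $\Br(\C(X))$, donc est non nulle. On applique alors le Th\'eor\`eme~\ref{general} avec $i=2$: il existe une courbe elliptique $E/\C$ et une classe $\beta\in H^1(E,\mu_\ell)$ telles que $\alpha\cup\beta$ soit non nulle dans $H^3(\C(X\times E),\mu_\ell^{\otimes 2})$.

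Il reste \`a passer des coefficients finis $\mu_\ell^{\otimes 2}$ aux coefficients $\Q/\Z(2)\simeq \Q/\Z$, ce qui est le point crucial, car le morphisme $H^3(\cdot,\Z/\ell)\to H^3(\cdot,\Q/\Z)$ n'est en g\'en\'eral pas injectif. On exploite ici l'origine brauerienne de $\alpha$. La classe $\alpha\cup\beta$ \'etant la restriction au point g\'en\'erique d'une classe d\'efinie sur la vari\'et\'e projective lisse $X\times E$, son image dans $H^3(\C(X\times E),\Q/\Z(2))$ reste non ramifi\'ee. Pour la non-nullit\'e, on reprend la sp\'ecialisation de Gabber au point $P$ utilis\'ee dans la preuve du Th\'eor\`eme~\ref{general}: la classe s'y sp\'ecialise en $A\cup\{t\}$, o\`u $A\in \Br(\C(X))=H^2(\C(X),\Q/\Z(1))$ est l'image non nulle de $\alpha_0$ et $\{t\}$ la classe de Kummer de $t$. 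Son r\'esidu le long du diviseur $\{t=0\}$ vaut $A\neq 0$, d'o\`u la non-nullit\'e de l'image de $\alpha\cup\beta$ dans $H^3(\C(X\times E),\Q/\Z(2))$. Ceci \'etablit $H^3_{nr}(\C(X\times E),\Q/\Z)\neq 0$, soit (a); le point (b) en d\'ecoule de la m\^eme fa\c con \`a partir du point (b) du Th\'eor\`eme~\ref{general}.

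Pour (c) et (d), supposons $\operatorname{CH}_0(X)$ support\'e sur une courbe $C\subset X$, c'est-\`a-dire $\operatorname{CH}_0(C)\to \operatorname{CH}_0(X)$ surjectif. Un point ferm\'e $(x,e)$ de $X\times E$ v\'erifie $[(x,e)]=\sum_i n_i\,[(c_i,e)]$ dans $\operatorname{CH}_0(X\times\{e\})$ avec $c_i\in C$, si bien que $\operatorname{CH}_0(C\times E)\to \operatorname{CH}_0(X\times E)$ est surjectif: $\operatorname{CH}_0(X\times E)$ est support\'e sur la surface $C\times E$. Sous cette hypoth\`ese, le th\'eor\`eme de Colliot-Th\'el\`ene et Voisin identifie $H^3_{nr}(\C(X\times E),\Q/\Z(2))$ au groupe $Z^4(X\times E)=\operatorname{Hdg}^4(X\times E,\Z)/H^4_{\mathrm{alg}}(X\times E,\Z)$ mesurant le d\'efaut de la conjecture de Hodge enti\`ere pour les cycles de codimension $2$. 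La non-nullit\'e obtenue en (a), resp. en (b), force donc $Z^4(X\times E)\neq 0$, c'est-\`a-dire l'\'echec de cette conjecture sur $X\times E$; d'o\`u (c) et (d).

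L'obstacle principal est le passage aux coefficients $\Q/\Z$ du deuxi\`eme paragraphe: il ne suffit pas de savoir la classe non nulle modulo $\ell$, et c'est le calcul de r\'esidu, rendu possible par le fait que $\alpha$ provient d'une classe de Brauer globale, qui assure sa survie dans $H^3_{nr}(\C(X\times E),\Q/\Z)$. Il faut par ailleurs v\'erifier que les hypoth\`eses du th\'eor\`eme de Colliot-Th\'el\`ene et Voisin s'appliquent au produit $X\times E$, ce qui r\'esulte du transfert \'el\'ementaire de la condition de support des z\'ero-cycles au produit effectu\'e ci-dessus.
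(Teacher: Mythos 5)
Your overall architecture is the paper's: you choose $\ell$ with $\Br(X)[\ell]\neq 0$, lift via the Kummer sequence and the injection $\Br(X)\hookrightarrow \Br(\C(X))$ (this is \cite[II, Cor. 1.10]{Gr} for $X$ smooth; calling it \og puret\'e \fg{} is a slight misnomer, but harmless) to get $\alpha\in H^2(X,\mu_\ell)$ nonzero at the generic point, apply the Th\'eor\`eme~\ref{general} with $i=2$, transfer the zero-cycle support hypothesis to the product (your verification that $CH_0(C\times E)\to CH_0(X\times E)$ is surjective is correct), and conclude (c), (d) from the isomorphism $H^3_{nr}\simeq Z^4$ of \cite{CTV}. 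All of that matches the paper and is correct.

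The genuine problem is in your second paragraph, the passage from $\mu_\ell^{\otimes 2}$ to $\Q/\Z(2)$ coefficients. Your residue computation takes place in the fiber over the special point $P$, i.e.\ on $X\times\G_{m}$: it shows that the image of $\alpha\cup\beta$ with $\Q/\Z(2)$-coefficients is nonzero at the generic point of \emph{that} fiber. But the elliptic curve $E$ of the conclusion is the fiber over a different point $M$, and your inference \emph{d'o\`u la non-nullit\'e de l'image de $\alpha\cup\beta$ dans $H^3(\C(X\times E),\Q/\Z(2))$} does not follow: transferring nonvanishing from $P$ to $M$ is exactly the content of Gabber's Prop.~A7, which is a statement about a fixed \emph{finite} coefficient sheaf. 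To make your route work you would have to apply Prop.~A7 at every level $\ell^k$ (the image of $\alpha\cup\beta$ in $H^3(\cdot,\mu_{\ell^k}^{\otimes 2})$ still has nonzero residue $A$ at $P$) and note that the union over $k$ of the resulting countable unions of closed subsets is still countable; as written, this step is missing and the argument fails. Moreover, the \og obstacle principal \fg{} you identify is in fact vacuous here, which is why the paper simply says that the Th\'eor\`eme~\ref{general} gives (a) and (b): while $H^3(W,\Z/\ell)\to H^3(W,\Q/\Z)$ can fail to be injective for a variety $W$, for a field $F$ of caract\'eristique z\'ero the kernel of $H^3(F,\mu_\ell^{\otimes 2})\to H^3(F,\Q/\Z(2))$ is $H^2(F,\Q/\Z(2))/\ell$, which vanishes because, by Merkurjev--Suslin, $H^2(F,\Q/\Z(2))\simeq K_2(F)\otimes\Q/\Z$ is divisible. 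Hence the mod~$\ell$ nonvanishing at the generic point of $X\times E$, which is precisely what the Th\'eor\`eme~\ref{general} delivers for the fiber at $M$, already implies nonvanishing with $\Q/\Z$-coefficients; this injectivity is implicit in the paper's definition of $H^3_{nr}(\cdot,\Q/\Z)$ as the increasing union of the groups $H^3_{nr}(\cdot,\Z/n)$. Either of these two repairs (Merkurjev--Suslin in one line, or Gabber level by level) closes the gap; the rest of your proof then stands.
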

\begin{proof}
Supposons $\Br(X)\neq 0$. Soit $\ell$ un nombre premier avec
 $\Br(X)({\ell})\neq 0$. De la suite de Kummer pour la cohomologie \'etale sur $X$
 et sur le corps des fonctions $\C(X)$, et  de l'injectivit\'e bien connue $\Br(X) \hookrightarrow \Br(\C(X))$
\cite[II, Cor. 1.10]{Gr},
  on conclut qu'il existe $\alpha \in H^2(X,\mu_{\ell})$ d'image non nulle dans
 $H^2(\C(X),\mu_{\ell}) = \Br(\C(X))[\ell]$. Le th\'eor\`eme \ref{general} donne alors
 les points (a) et (b). 
 
 Dans un travail avec C. Voisin, en utilisant des r\'esultats profonds de $K$-th\'eorie alg\'ebrique,
 on a \'etabli le r\'esultat suivant \cite[Thm. 1.1, Thm. 3.9]{CTV} :
 pour toute vari\'et\'e $W$ projective et lisse sur $\C$
 dont le groupe de Chow des z\'ero-cycles $CH_{0}(W)$ est support\'e sur une surface,
 ce qui signifie qu'il existe une surface projective lisse $U$ sur $\C$ et un morphisme $U \to W$ tels que l'application
 induite $CH_{0}(U) \to CH_{0}(W)$ soit surjective,
 on a un isomorphisme de groupes {\it finis}
 $$ H^3_{nr}(W, \Q/\Z) \simeq Z^4(W),$$
 o\`u $ H^3_{nr}(W, \Q/\Z)$ est la r\'eunion des groupes $H^3_{nr}(W,\Z/n)$
 et o\`u $Z^4(W)$ est le 
 quotient du groupe des cycles de Hodge  
 dans $H^4(W,\Z)$ par l'image des classes de cycles alg\'ebriques de codimension 2.
 Si le groupe de Chow des z\'ero-cycles de $X$ est support\'e sur une courbe $C$, alors
 le groupe de Chow des z\'ero-cycles de $X \times_{\C} E$ est support\'e sur la surface
 $C \times_{\C}E$.
 On obtient ainsi (c) et (d).
\end{proof}

Outre les surfaces d'Enriques, de nombreuses vari\'et\'es dont le groupe
de Chow des z\'ero-cycles est support\'e sur une courbe satisfont 
$\Br(X) \neq 0$.
 
Rappelons ici  que pour toute vari\'et\'e $X/\C$ projective, lisse, connexe,
on dispose d'une suite exacte
$$ 0 \to (\Q/\Z)^{b_{2}-\rho} \to \Br(X) \to H^3_{\Betti}(X,\Z)_{tors} \to 0,$$
o\`u $\rho$ est le rang du groupe de N\'eron-Severi $\NS(X)$ et $b_{2}$ le second nombre de Betti
de $X$
(voir \cite[III. (8.7) et (8.9)]{Gr}), et que l'on a $b_{2}-\rho=0$ si et seulement si $H^2(X,O_{X})=0$. 
Bloch  \cite{Bl}  a montr\'e que l'hypoth\`ese que
 le groupe de Chow des z\'ero-cycles sur $X$ est support\'e sur une courbe
 implique $H^2(X,O_{X})=0$ et  que  le groupe $ \Br(X)$ est fini, et \'egal au groupe $H^3_{\Betti}(X,\Z)_{tors}$.
 
Pour $X/\C$ une surface projective, lisse, connexe,
de groupe de N\'eron-Severi $\NS(X)$,
notant $\Br(X)^0$ le sous-groupe divisible maximal de $\Br(X)$,
on a un isomorphisme naturel \cite[III (8.12)]{Gr} :
$$\Br(X)/\Br(X)^0 \simeq \Hom(\NS(X)_{tors}, \Q/\Z).$$
Dans le cas particulier o\`u $H^2(X,O_{X})=0$, en particulier
si le groupe de Chow des z\'ero-cycles de la surface $X$ est support\'e sur une courbe,
on a donc 
$$\Br(X) \simeq \Hom(\NS(X)_{tors}, \Q/\Z).$$
Pour les surfaces, c'est  une conjecture c\'el\`ebre
de Spencer Bloch que l'hypoth\`ese $H^2(X,O_{X})=0$
implique que le groupe de Chow des z\'ero-cycles sur $X$ est support\'e sur une courbe.
Cette conjecture est
connue dans le cas des surfaces non de type g\'en\'eral
\cite{BKL},  en particulier pour les surfaces d'Enriques, 
et dans quelques autres cas \cite{voisin}.

Dans le cas plus particulier d'une surface $X$
satisfaisant $q= p_{g}=0$, i.e. avec $H^{i}(X,O_{X})=0$ pour $i=1,2$,
comme par exemple une surface d'Enriques,
on~a  $$\Br(X) \simeq \Hom(\NS(X)_{tors}, \Q/\Z).$$

De nombreuses surfaces avec $p_{g}=q=0$ et
$\NS(X)_{tors} \neq 0$ ont \'et\'e d\'ecrites (surfaces de Godeaux, surfaces de Campedelli
(voir \cite[Chap. VII, \S 11]{BPV} et  \cite{BCGP}).
 
La proposition  \cite{Bl}  et les rappels ci-dessous donnent   :

\begin{cor}\label{enriquesgen}
Soit $X$ une surface sur $\C$ dont le groupe de N\'eron-Severi $\NS(X)$ poss\`ede de la torsion,
et dont le groupe de Chow des z\'ero-cycles est support\'e sur une courbe, donc  qui satisfait
$H^2(X,O_{X})=0$.
  Il existe une courbe elliptique $E/\C$
telle que  la conjecture de Hodge enti\`ere pour les cycles de codimension 2 sur $X \times E$ 
soit en d\'efaut. Si $X$ peut \^etre d\'efinie sur un corps de nombres, cette d\'efaillance a lieu pour toute
courbe elliptique $E$ d'invariant $j$ transcendant. $\Box$
\end{cor}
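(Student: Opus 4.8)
Le plan est de se ramener directement \`a la Proposition~\ref{brauer}, en \'etablissant au pr\'ealable que l'hypoth\`ese de torsion sur le groupe de N\'eron-Severi force la non-nullit\'e du groupe de Brauer. Je commencerais par invoquer le th\'eor\`eme de Bloch \cite{Bl}, selon lequel l'hypoth\`ese que le groupe de Chow des z\'ero-cycles de $X$ est support\'e sur une courbe implique $H^2(X,O_X)=0$. Comme rappel\'e ci-dessus pour les surfaces, cette annulation force la nullit\'e du sous-groupe divisible maximal $\Br(X)^0$ et fournit l'isomorphisme naturel
$$\Br(X) \simeq \Hom(\NS(X)_{tors}, \Q/\Z).$$
Puisque $\NS(X)_{tors}$ est un groupe fini non nul par hypoth\`ese, son dual $\Hom(\NS(X)_{tors}, \Q/\Z)$ est non nul, et l'on conclut que $\Br(X) \neq 0$.

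Une fois cette non-nullit\'e acquise, la surface $X$ satisfait toutes les hypoth\`eses de la Proposition~\ref{brauer} : il s'agit d'une vari\'et\'e projective, lisse et connexe sur $\C$, de groupe de Brauer non nul, et dont le groupe de Chow des z\'ero-cycles est support\'e sur une courbe. J'appliquerais alors le point~(c) de cette proposition pour obtenir l'existence d'une courbe elliptique $E/\C$ telle que la conjecture de Hodge enti\`ere pour les cycles de codimension 2 sur $X \times E$ soit en d\'efaut. Pour la seconde assertion, sous l'hypoth\`ese suppl\'ementaire que $X$ est d\'efinie sur un corps de nombres, le point~(d) de la m\^eme proposition donne directement que cette d\'efaillance vaut pour toute courbe elliptique $E$ d'invariant $j$ transcendant.

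La d\'emonstration ne pr\'esente aucun obstacle substantiel : tout le contenu profond est d\'ej\`a \'etabli en amont, \`a savoir le th\'eor\`eme de Gabber sous-jacent au Th\'eor\`eme~\ref{general}, le lien entre cohomologie non ramifi\'ee en degr\'e 3 et conjecture de Hodge enti\`ere \'etabli avec Voisin \cite{CTV}, et le th\'eor\`eme de Bloch \cite{Bl}. Le seul point demandant un minimum de vigilance est le passage de la torsion de $\NS(X)$ \`a la non-nullit\'e de $\Br(X)$, qui repose sur l'annulation de $H^2(X,O_X)$ et la dualit\'e ci-dessus ; c'est pr\'ecis\'ement ce qui permet d'enclencher la Proposition~\ref{brauer}.
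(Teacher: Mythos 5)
Votre démonstration est correcte et suit exactement le chemin que le papier laisse implicite (le corollaire y est conclu d'un simple $\Box$ après les rappels) : théorème de Bloch pour $H^2(X,O_X)=0$, isomorphisme de Grothendieck $\Br(X)\simeq \Hom(\NS(X)_{\mathrm{tors}},\Q/\Z)$ donnant $\Br(X)\neq 0$, puis application des points (c) et (d) de la Proposition~\ref{brauer}. Rien à redire.
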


\begin{rema}
  Dans le cas particulier des surfaces d'Enriques,
   on retrouve donc ainsi  \cite[Prop. 2.1]{BO}, dont
 la d\'emonstration   repose, dans le cas $\ell=2$,
 sur une construction du m\^eme type que celle 
  de \cite{G}  rappel\'ee au d\'ebut du th\'eor\`eme \ref{general}.
La d\'emonstration de \cite[Prop. 2.1]{BO} pour les surfaces d'Enriques
est plus ``classique''
   que celle pr\'esent\'ee ici : elle n'utilise pas de r\'esultat de $K$-th\'eorie alg\'ebrique.
\end{rema}

\begin{rema} D\`es la dimension 3, il existe des vari\'et\'es projectives et lisses $X$ sur $\C$,
rationnellement connexes, telles que $\Br(X) \neq 0$, comme les exemples d'Artin et Mumford
de solides fibr\'es en coniques sur le plan projectif $\P^2_{\C}$. On peut facilement donner de tels
exemples qui sont d\'efinis sur un corps de nombres.
Pour les vari\'et\'es rationnellement connexes, le groupe de Chow des z\'ero-cycles est support\'e sur un point.
Il est donc facile d'exhiber {\it des vari\'et\'es   $W= X \times_{\C} E$  de dimension 4 explicites, produits d'une
vari\'et\'e rationnellement connexe de dimension 3 et d'une courbe elliptique, pour
lesquelles la conjecture de Hodge enti\`ere pour les cycles de codimension deux est en
d\'efaut} : on prend la vari\'et\'e  produit d'un exemple d'Artin-Mumford d\'efini sur un corps de nombres et 
d'une courbe elliptique d'invariant $j$ transcendant. Notons que  Schreieder  \cite[Cor. 1.6]{Sch} a r\'ecemment
d\'emontr\'e   qu'en dimension au moins 4 il existe
des hypersurfaces de Fano (donc rationnellement connexes)  pour lesquelles
la conjecture de Hodge enti\`ere pour les cycles de codimension deux est en d\'efaut. 
\end{rema}


\begin{thebibliography}{99}

\bibitem{BPV} W. Barth, C. Peters, A. Van de Ven, {\it Compact Complex Surfaces} (en anglais), Springer-Verlag,
Berlin 1984
 
\bibitem{BCGP} I. Bauer, F. Catanese, F. Grunewald and R. Pignatelli, Quotients of products
of curves, new surfaces with $p_{g}=0$ and their fundamental groups (en anglais),
American Journal of Mathematics, Volume {\bf 134}, no.4 (2012)  993--1049.

\bibitem{BO} O. Benoist et J. Ch. Ottem, Failure of the integral Hodge conjecture 
for threefolds of Kodaira dimension zero (en anglais), https://arxiv.org/abs/1802.01845v1,
\`a para\^{\i}tre dans Comment. Math. Helv.



\bibitem{Bl} S. Bloch, On an argument of Mumford in the theory of algebraic cycles (en anglais),
in {\it  Journ\'ees de G\'eom\'etrie Alg\'ebrique d'Angers}, Juillet 1979, Sijthoff \& Noordhoff, Alphen aan den Rijn--Germantown, Md., 1980, pp. 217--221.

\bibitem{BKL} S. Bloch, A. Kas, D. Lieberman, 
Zero cycles on surfaces with $p_{g}=0$ (en anglais),
Compositio Math. {\bf 33} (1976), no. 2, 135--145.
  

\bibitem{CTV} J.-L. Colliot-Th\'el\`ene et C. Voisin,
 Cohomologie non ramifi\'ee et conjecture de Hodge enti\`ere, 
 Duke Math. J. {\bf 161} (2012), no. 5, 735--801.

\bibitem{G} O. Gabber, appendice \`a l'article  Exposant et indice d'alg\`ebres simples centrales
non ramifi\'ees, par J.-L. Colliot-Th\'el\`ene,  
 Enseign. Math. (2) {\bf 48} (2002), no. 1-2, 127--146. 
 
\bibitem{Gr} A. Grothendieck, Le groupe de Brauer, I, II, III, dans {\it  Dix expos\'es sur le cohomologie des sch\'emas},
  Masson, Paris, et North-Holland, Amsterdam, 1968, 46--66 (I), 67--87 (II), 88--188 (III).

\bibitem{M}  J. S. Milne, {\it \'Etale Cohomology} (en anglais),  Princeton University Press, 1980.

\bibitem{Sch}  S. Schreieder,  Stably irrational hypersurfaces of small slopes (en anglais),
https://arxiv.org/abs/1801.05397v3

\bibitem{voisin} C. Voisin, Bloch's conjecture for Catanese and Barlow surfaces (en anglais), J. Differential Geometry {\bf 97} (2014) 149--175.

\end{thebibliography}
\end{document}